\theoremstyle{plain}
\newtheorem{teorema}{Theorem}[section]
\newtheorem{lemma}[teorema]{Lemma}
\newtheorem{corollario}[teorema]{Corollary}
\newtheorem*{teorema*}{Theorem}
\newtheorem*{proposizione*}{Proposition}
\newtheorem*{lemma*}{Lemma}
\newtheorem*{corollario*}{Corollary}
\theoremstyle{definition}
\newtheorem*{definizione*}{Definition}
\newtheorem*{esempio*}{Example}
\theoremstyle{remark}
\newtheorem{nota}[teorema]{Remark}
\newtheorem*{nota*}{Remark}
\newtheorem*{osservazione*}{Remark}
\newtheorem*{esercizio*}{Esercizio}
\newcommand{\m}[1]{\mathcal{#1}}
\newcommand{\bb}[1]{\mathbb{#1}}
\newcommand{\mrm}[1]{\mathrm{#1}}
\newcommand{\scr}[1]{\mathscr{#1}}
\newcommand{\diff}{\partial}         
\newcommand{\bdiff}{\bar{\partial}} 
\newcommand{\cotJ}{T^*\!\!\!\scr{J}}
\DeclarePairedDelimiter\card{\lvert}{\rvert} 
\DeclarePairedDelimiter\norm{\lVert}{\rVert}
\DeclarePairedDelimiter{\set}{\{}{\}}
\newcommand{\tc}{\mathrel{}\mathclose{}\middle|\mathopen{}\mathrel{}}
\numberwithin{equation}{section} 
\title{Solutions to Donaldson's hyperk\"ahler reduction on a curve}
\date{}
\author{Carlo Scarpa and Jacopo Stoppa}
\begin{document}

\maketitle

\abstract{\noindent We study an infinite-dimensional hyperk\"ahler reduction introduced by Donaldson and associated with the constant scalar curvature equation on a Riemann surface. It is known that the corresponding moment map equations admit special solutions constructed from holomorphic quadratic differentials. Here we obtain a more general existence result and so a larger hyperk\"ahler moduli space.}


\section{Introduction}

Let $\Sigma$ be a compact oriented surface of genus $g(\Sigma)>1$, and let $\omega$ be a fixed area form on $\Sigma$. The group $\mathcal{G}$ of exact area-preserving diffeomorphisms acts on the infinite-dimensional manifold $\scr{J}$ of complex structures on $\Sigma$ by pullback, and this action is Hamiltonian, with a moment map $\mu$ given by the Gauss curvature, minus its average. This fact is a special case of well-known results of Fujiki and Donaldson on scalar curvature as a moment map, and was first pointed out by Quillen (see \cite{Donaldson_hyperkahler} Section 2.2). The action of $\mathcal{G}$ on $\scr{J}$ preserves a natural formal K\"ahler structure, and one has a well-defined K\"ahler reduction $\mathcal{M} = \mu^{-1}(0)/\mathcal{G}$. This space $\mathcal{M}$ can be identified with the moduli space of marked Riemann surfaces $(\Sigma, J)$, together with a choice of holomorphic line bundle $L$ of fixed degree. The Teichm\"uller space $\mathcal{T}$ of $\Sigma$ is the quotient of $\mathcal{M}$ by the torus $H^1(\Sigma, \mathbb{R})/H^1(\Sigma, \mathbb{Z})$ (see loc. cit.).
 
Motivated by a clear analogy with the case of Higgs bundles and harmonic metrics, Donaldson \cite{Donaldson_hyperkahler} considered the problem of extending the K\"ahler reduction of $\scr{J}$ described above to a hyperk\"ahler reduction of (an open subset of) the cotangent space $\cotJ$. Indeed $\cotJ$ comes with a natural hyperk\"ahler structure, such that the induced action of $\mathcal{G}$ on it is Hamiltonian with respect to the symplectic forms in the hyperk\"ahler family. The zero-locus equations for the moment maps of this action give a system of equations, generalizing the usual constant Gauss curvature equation on $\Sigma$. 

These equations are given, a bit implicitly, in \cite[Proposition $17$]{Donaldson_hyperkahler}. Taking the equivalent point of view of fixing $J$ and varying $\omega$ in its class they can be spelled out in terms of a smooth quadratic differential $q$ and a K\"ahler form $\omega'\in[\omega]$ on a fixed marked surface $(\Sigma, J)$, with corresponding metric $g'$, yielding the system
\begin{equation}\label{eq:sistema_HCSCK_originale}
\begin{dcases}
\norm{q}^2_{\omega'}<1;\\
{\nabla^{1,0}_{\omega'}}^*{\nabla^{1,0}_{\omega'}}^*q=0;\\
2\,s(\omega')-2\,\widehat{s(\omega')}+\Delta_{\omega'}\,\mrm{log}\left(1+\sqrt{1-\norm{q}_{\omega'}^2}\right) +\mrm{div} \frac{2\mathrm{Re}\,(g'(\bdiff q, \bar{q}))^{\sharp'}}{1+\sqrt{1-\norm{q}_{\omega'}^2}} =0 
\end{dcases}
\end{equation}
(see \cite[\S $4.2$]{ScarpaStoppa_hyperk_reduction}). Here ${\nabla^{1,0}_{\omega'}}^*$ denotes the formal adjoint to the $(1, 0)$--part of the Levi-Civita connection. The two partial differential equations appearing in \eqref{eq:sistema_HCSCK_originale} correspond respectively to the complex and real moment maps for the action of $\mathcal{G}$. 

What we discussed so far is a special case of a hyperk\"ahler reduction that can be formulated for K\"ahler manifolds of any dimension. In our previous work \cite{ScarpaStoppa_hyperk_reduction} we studied these higher dimensional equations, for which we proposed the name \emph{HcscK equations}, focusing on complex surfaces (our work relies on the general results of Biquard and Gauduchon \cite{Biquard_Gauduchon} concerning hyperk\"ahler metrics on cotangent bundles). In the original case of curves, Donaldson was interested in solutions of \eqref{eq:sistema_HCSCK_originale} given by a \emph{holomorphic} quadratic differential $q$, since these special solutions can be used to define a hyperk\"ahler extension of the Weil-Petersson metric on the Teichm\"uller space $\m{T}$ of $\Sigma$ to an open subset of $T^*\m{T}$. Under the assumption that $q$ is holomorphic, the real and complex moment map equations decouple, and \eqref{eq:sistema_HCSCK_originale} becomes
\begin{equation}\label{eq:sistema_HCSCK_Donaldson}
\begin{dcases}
\norm{q}^2_{\omega'}<1;\\
2\,s(\omega')-2\,\widehat{s(\omega')}+\Delta\,\mrm{log}\left(1+\sqrt{1-\norm{q}_{\omega'}^2}\right) =0.
\end{dcases}
\end{equation}
This equation was studied by T. Hodge \cite{Hodge_phd_thesis}, who obtained existence and uniqueness results under an explicit condition on $q$. More recent works on this topic include \cite{traut}. However, from the higher dimensional point of view of \cite{ScarpaStoppa_hyperk_reduction}, the restriction to holomorphic quadratic differentials is not very natural, and this motivates us to consider more general solutions to the original coupled system \eqref{eq:sistema_HCSCK_originale}.    

In order to state our results we fix a marked Riemann surface $(\Sigma, J)$ with genus $g(\Sigma) > 1$ and let $\omega_0 \in [\omega]$ denote the unique K\"ahler form of constant scalar curvature.
\begin{teorema}\label{MainThm} The system \eqref{eq:sistema_HCSCK_originale} admits a set of solutions whose points are in bijection with pairs $(\tau,\beta)$, consisting of a holomorphic quadratic differential $\tau$ and a holomorphic $1$-form $\beta$, such that $\norm{\tau}_{\m{C}^{0,\frac{1}{2}}(\omega_0)} < c_1$, $\norm{\beta}_{\m{C}^{1,\frac{1}{2}}(\omega_0)}<c_2$ for certain $c_1, c_2 > 0$. The constants $c_1, c_2$ depend on $(\Sigma, J)$ only through a few Sobolev and elliptic constants with respect to the hyperbolic metric $\omega_0$.
\end{teorema}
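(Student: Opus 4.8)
The plan is to solve the coupled system by a perturbation argument around the constant-scalar-curvature metric $\omega_0$, using the holomorphic data $(\tau,\beta)$ as the small parameters. The first observation is that the complex moment map equation ${\nabla^{1,0}_{\omega'}}^*{\nabla^{1,0}_{\omega'}}^* q = 0$ should be addressed first, since it constrains $q$ before the scalar-curvature-type equation is imposed. I would look for $q$ in the form $q = \tau + \bar\partial$-exact (or $\nabla$-exact) correction terms built from $\beta$ and the unknown conformal factor, so that the leading part is the prescribed holomorphic quadratic differential $\tau$ while the correction absorbs the failure of holomorphicity once $\omega'$ moves away from $\omega_0$. Concretely, I expect $q$ to be determined as the solution of a linear elliptic equation (the kernel of ${\nabla^{1,0}}^*{\nabla^{1,0}}^*$ at $\omega_0$ being exactly the holomorphic quadratic differentials by Riemann--Roch, with the holomorphic $1$-form $\beta$ entering through the lower-order coupling), so that $q$ becomes a smooth function of $(\tau,\beta,\omega')$ with $q = \tau$ when $\omega'=\omega_0$ and $\beta = 0$.

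With $q = q(\tau,\beta,\omega')$ so determined, I would substitute into the real moment map equation and set up a fixed-point problem for the K\"ahler potential $\varphi$ writing $\omega' = \omega_0 + i\partial\bar\partial\varphi$. The natural framework is to define a map $F(\varphi;\tau,\beta)$ given by the left-hand side of the third equation in \eqref{eq:sistema_HCSCK_originale}, viewed as a map between H\"older spaces $\m{C}^{2,\frac12}\to\m{C}^{0,\frac12}$ (the exponent $\frac12$ and the regularity of $\tau,\beta$ in the statement strongly suggest Schauder theory is the intended tool). At $(\varphi,\tau,\beta)=(0,0,0)$ one has $F=0$, since $\omega_0$ has constant scalar curvature and $q=0$ kills all the extra terms. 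The linearization $D_\varphi F$ at this point is, up to a positive constant, the Lichnerowicz-type operator $2\,D^*D$ governing the constant-scalar-curvature equation; because $g(\Sigma)>1$ there are no nontrivial holomorphic vector fields, so this operator is an isomorphism on the relevant subspaces (after quotienting by constants, which matches the $-2\widehat{s(\omega')}$ normalization). I would then invoke the implicit function theorem in Banach spaces to solve $F(\varphi;\tau,\beta)=0$ uniquely for small $(\tau,\beta)$, obtaining $\varphi = \varphi(\tau,\beta)$ depending smoothly on the data.

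The main obstacle is controlling the coupling between the two equations and verifying that the constraint $\norm{q}^2_{\omega'}<1$ holds throughout. Because $q$ itself depends on $\omega'$, the two equations are genuinely coupled and the correct way to proceed is to treat the pair $(\varphi,q)$ — or equivalently $(\varphi, \text{correction to }q)$ — as a single unknown and apply the implicit function theorem to the combined map $(\text{complex m.m.},\text{real m.m.})$, checking that the full linearization is an isomorphism. The pointwise bound $\norm{q}_{\omega'}^2<1$ is then automatic for small data by continuity, since $\norm{q}_{\omega_0}^2$ is $O(\norm{\tau}^2)$ and the metric $\omega'$ is a small perturbation of $\omega_0$; this is where the explicit smallness thresholds $c_1,c_2$ are extracted, and their dependence on $(\Sigma,J)$ only through Sobolev and Schauder constants for $\omega_0$ comes directly from the norm estimates in the inverse function theorem. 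The delicate point to get right is the mapping properties and the nonlinearity of the divergence term $\mrm{div}\,\tfrac{2\mathrm{Re}(g'(\bdiff q,\bar q))^{\sharp'}}{1+\sqrt{1-\norm{q}_{\omega'}^2}}$, which is quadratic in $q$ and hence contributes only at second order, so it does not affect the leading isomorphism but must be shown to be a well-defined $\m{C}^{0,\frac12}$-valued smooth map of the unknowns.
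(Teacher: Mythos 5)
Your outline of the first step is essentially the paper's: solutions of the complex moment map equation are sought in the form $q=\tau+\nabla^{1,0}\eta$ with $\eta$ determined by $\beta$ through a linear elliptic equation, and the pair (conformal potential, correction to $q$) is then treated as a single unknown. One correction, though: the kernel of ${\nabla^{1,0}}^*{\nabla^{1,0}}^*$ is \emph{not} the space of holomorphic quadratic differentials; it is the $(4g-3)$-dimensional space $\set{\tau+\nabla^{1,0}G(\beta)}$ with $\tau\in H^0(K^2_\Sigma)$, $\beta\in H^0(K_\Sigma)$, since ${\nabla^{1,0}}^*q$ need only be a holomorphic $1$-form, not zero. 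This is not a pedantic point: it is exactly why the solution set is in bijection with \emph{pairs} $(\tau,\beta)$ rather than with quadratic differentials alone, and your phrasing (``$\beta$ entering through the lower-order coupling'') does not pin this down.

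The genuine gap is in your last paragraph. An implicit function theorem argument at $(\omega_0,0,0)$ does give the first sentence of the theorem for \emph{some} $c_1,c_2>0$ --- the paper itself concedes this in the introduction --- but the real content of the statement is the second sentence: that $c_1,c_2$ depend on $(\Sigma,J)$ only through a few Sobolev and elliptic constants of the hyperbolic metric. Your claim that this ``comes directly from the norm estimates in the inverse function theorem'' is not justified. In a quantitative IFT the radius of solvability depends not only on the norm of the inverse of the linearization (which is indeed a Schauder constant of $\omega_0$) but also on the modulus of continuity of the derivative of the full nonlinear map as the base point varies, i.e.\ on Lipschitz-type bounds for the linearized operator in terms of $(\varphi,\eta,\tau,\beta)$, and on the interplay with the open constraint $\norm{q}^2_{\omega'}<1$, whose persistence requires a quantitative bound on how fast the metric can degenerate, not just ``continuity.'' Verifying that all of these quantities are controlled by the stated constants is precisely the bulk of the paper's work: it first applies Donaldson's change of variables $\tilde{\omega}=\left(1+\sqrt{1-\norm{q}^2_\omega}\right)\omega$, which puts the real moment map equation in a form amenable to the maximum principle (at the price of making the complex moment map equation quasi-linear), then runs a continuity method along $(t\tau,t\beta)$, $t\in[0,1]$, proving along the path a $\m{C}^0$ bound by the maximum principle, $L^2$ and $L^4$ bounds on $\mrm{d}f$ and $\Delta f$, hence $\m{C}^{3,\frac{1}{2}}$ bounds via Morrey/Sobolev embedding, and finally openness via an explicit perturbation-of-invertible-operators lemma; each step names the Sobolev or elliptic constant it consumes, which is what makes the dependence in the theorem meaningful. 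Without an argument of this type (or an equally careful quantitative IFT, which you would still have to write out in full), the second sentence of the statement remains unproved.
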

In fact an application of the implicit function theorem would give quite easily the result above form some $c_1, c_2 > 0$, but much of the work here goes into proving the stronger characterization in terms of Sobolev and elliptic constants of the hyperbolic metric. The precise constants which play a role will be made clear in the course of the proof. With a little effort the dependence upon these constants could be made completely explicit. As a consequence this gives a construction of a hyperk\"ahler thickening to an open neighbourhood of the zero section in $T^*\mathcal{M}$ of the K\"ahler metric on the moduli space $\mathcal{M}$, which contains that considered by Donaldson as the locus $\beta = 0$. Moreover this open neighbourhood can be controlled in terms of hyperbolic geometry. Note that $T^*\mathcal{M}$ can be identified with the moduli space of collections consisting of a marked Riemann surface $(\Sigma, J)$ together with a holomorphic line bundle of fixed degree, a holomorphic quadratic differential $\tau$ and a holomorphic $1$-form $\beta$. 
\begin{corollario} There is an open subset of the space of collections $T^*\mathcal{M} = \{[(\Sigma, J, L, \tau, \beta)]\}$, given by the conditions $\norm{\tau}_{\m{C}^{0,\frac{1}{2}}(\omega_0)} < c_1(\omega_0)$, $\norm{\beta}_{\m{C}^{1,\frac{1}{2}}(\omega_0)}<c_2(\omega_0)$, which carries an incomplete hyperk\"ahler structure, induced by the hyperk\"ahler reduction of $\cotJ$ by $\mathcal{G}$.  
\end{corollario}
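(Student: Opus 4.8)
The plan is to deduce the corollary from Theorem \ref{MainThm} together with the general formalism of hyperk\"ahler quotients, rather than by any further hard analysis. Recall from \cite{Donaldson_hyperkahler} and \cite{ScarpaStoppa_hyperk_reduction} that $\cotJ$ carries a natural (formal) hyperk\"ahler structure $(g, I, J, K)$, for which the Biquard--Gauduchon construction \cite{Biquard_Gauduchon} supplies the metric on the cotangent fibres, and that the induced $\mathcal{G}$--action is tri-Hamiltonian, with a hyperk\"ahler moment map $\mu = (\mu_1, \mu_2, \mu_3)$ whose zero locus is cut out precisely by the three conditions of \eqref{eq:sistema_HCSCK_originale}: the norm bound is the open condition guaranteeing that one lies in the domain of the hyperk\"ahler structure, while the second and third equations are the complex and real moment-map equations. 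By the standard hyperk\"ahler quotient theorem, applied formally as in \cite{Donaldson_hyperkahler, ScarpaStoppa_hyperk_reduction}, the quotient $\mu^{-1}(0)/\mathcal{G}$ inherits a hyperk\"ahler structure on the open subset where $0$ is a regular value of $\mu$ and the action of $\mathcal{G}$ is free; the closedness of the three reduced symplectic forms and the quaternionic relations are then automatic, so no separate verification is needed for them.

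First I would identify this quotient with the prescribed open set of $T^*\m{M}$. Taking the viewpoint of fixing $J$ and varying $\omega'$, a point of $\mu^{-1}(0)/\mathcal{G}$ is a gauge-equivalence class of solutions of \eqref{eq:sistema_HCSCK_originale}, and Theorem \ref{MainThm} puts these in bijection with pairs $(\tau,\beta)$ in the region $U = \{ \norm{\tau}_{\m{C}^{0,\frac{1}{2}}(\omega_0)} < c_1,\ \norm{\beta}_{\m{C}^{1,\frac{1}{2}}(\omega_0)} < c_2 \} \subset T^*\m{M}$, under the identification of $T^*\m{M}$ with the moduli of collections $(\Sigma, J, L, \tau, \beta)$. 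This bijection must be upgraded to a diffeomorphism, and this is where the implicit-function-theorem construction underlying Theorem \ref{MainThm} does the work: it produces the solution $\omega'$, and hence the whole $\mathcal{G}$--orbit, as a smooth function of the parameters $(\tau,\beta)$, so that the map $U \to \mu^{-1}(0)/\mathcal{G}$ is smooth and one checks that its differential is an isomorphism.

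Next I would match the reduced complex structures with the natural ones on $T^*\m{M}$. The efficient route is the holomorphic description of the hyperk\"ahler quotient in a fixed complex structure: for $I$, the combination $\mu_{\bb C} = \mu_2 + \I\mu_3$ is an $I$-holomorphic moment map for the (formal) complexified action, the locus $\mu_{\bb C}^{-1}(0)$ is an $I$-complex submanifold, and the complex moment-map equation of \eqref{eq:sistema_HCSCK_originale} is exactly what encodes the holomorphicity of the datum $(\tau,\beta)$. A Kempf--Ness-type correspondence then realizes $\mu^{-1}(0)/\mathcal{G}$ as the complex quotient $\mu_{\bb C}^{-1}(0)/\!/\mathcal{G}^{\bb C}$, which is $(U, I)$ with its natural complex structure as an open subset of $T^*\m{M}$. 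Since $T^*\m{M}$ is finite-dimensional, the formal reduced structure is here an honest hyperk\"ahler metric. The stabilizer issue --- the isotropy of $\mathcal{G}$ at $(\Sigma, J)$ being the automorphism group of the surface --- is the same one already present for the K\"ahler reduction $\m{M}$, and is handled in the usual way, by restricting to marked surfaces with no nontrivial automorphisms or by passing to orbifold charts; the locus $\beta = 0$ recovers Donaldson's hyperk\"ahler thickening of $\m{T}$.

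I expect the main obstacle to lie in the regularity of the quotient in the infinite-dimensional setting. One must verify that $0$ is a regular value of $\mu$ on the relevant open set, i.e.\ surjectivity of the linearised moment-map operator, an elliptic Fredholm statement whose hard part is already contained in the analysis behind Theorem \ref{MainThm}, and that the $\mathcal{G}$--action on $\mu^{-1}(0)$ is free and proper, so that a slice theorem delivers a genuine smooth manifold carrying a nondegenerate, positive-definite reduced metric. Finally, the qualifier \emph{incomplete} is to be read off from the boundary behaviour: as $(\tau,\beta)$ approaches $\partial U$ the pointwise norm $\norm{q}_{\omega'}$ tends to $1$, the factor $1 + \sqrt{1 - \norm{q}_{\omega'}^2}$ degenerates, and the reduced metric fails to extend across $\partial U$, so the resulting hyperk\"ahler manifold is geodesically incomplete.
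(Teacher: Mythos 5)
Your proposal is correct and takes essentially the same route as the paper: the corollary is deduced directly from Theorem \ref{MainThm} together with the formal hyperk\"ahler reduction framework of \cite{Donaldson_hyperkahler} and \cite{ScarpaStoppa_hyperk_reduction}, with the bijection onto pairs $(\tau,\beta)$ providing the identification with the stated open subset of $T^*\mathcal{M}$ and the locus $\beta=0$ recovering Donaldson's thickening. The paper offers no separate argument --- it treats the corollary as immediate from the theorem --- so your extra scaffolding (Kempf--Ness matching of complex structures, slice-theorem and incompleteness remarks) simply spells out what the paper leaves at the formal level.
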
  
The rest of the paper is devoted to a proof of Theorem \ref{MainThm}. We provide here an outline. 

In Section \ref{PrelimSec} we first show that solutions of \eqref{eq:sistema_HCSCK_originale}, if they exist, are parametrised a priori by pairs $(\tau, \beta)$ as above. The pair $(0,0)$ corresponds to the unique hyperbolic metric $\omega_0$. Then, following an idea of Donaldson, we perform a conformal transformation of the unknown metric $\omega'$ which brings the real moment map equation to a much simpler form. But in our case this has the cost of turning the linear complex moment map equation into a more complicated quasi-linear equation. 

In Section \ref{ContinuitySec} we introduce a continuity method for solving these equivalent equations. It is given simply by deforming a given pair $(\tau, \beta)$ to $(t \tau, t\beta)$ for $t\in[0,1]$. In sections \ref{EstimatesSec}, \ref{ClosednessSec} we proceed to establish $\m{C}^{2,\frac{1}{2}}(\omega_0)$ a priori estimates on solutions $\omega_t$, $q_t$, and to show that the condition $\norm{q_t}^2_{\omega_t} < 1$ is closed along the continuity path. The latter fact requires to control the growth of the norm $||\omega_t||_{\m{C}^{0,\frac{1}{2}}(\omega_0)}$, which we can achieve provided the norms $\norm{\tau}_{\m{C}^{0,\frac{1}{2}}(\omega_0)}$, $\norm{\beta}_{\m{C}^{1}(\omega_0)}$ are sufficiently small, depending only on a few Sobolev constants of $\omega_0$, and elliptic constants for the Bochner laplacian $\nabla^*_{\omega_0}\nabla_{\omega_0}$ acting on $1$-forms and the Riemannian laplacian $\Delta_{\omega_0}$ acting on functions. Finally in \ref{OpennessSec} we show that the linearization of the operator corresponding to our equations is an isomorphism. For this we need to take $\norm{\beta}_{\m{C}^{1, \frac{1}{2}}(\omega_0)}$ sufficiently small, again in terms of an elliptic constant for the Riemannian laplacian $\Delta_{\omega_0}$ on functions. Thus our continuity path is also open, and moreover the parametrization by $(\tau, \beta)$ is bijective.

\noindent\textbf{Acknowledgements.} We are grateful to Olivier Biquard for a discussion related to the present paper. 
\section{The HcscK system on a curve}\label{PrelimSec}
We are concerned with the coupled system on a Riemann surface $\Sigma$ of genus $g(\Sigma)>1$
\begin{equation}\label{eq:HcscK_system_curve}
\begin{dcases}
{\nabla^{1,0}}^*{\nabla^{1,0}}^*q=0\\
2\,s(\omega)-2\,\widehat{s}+\Delta\left(\mrm{log}\left(1+\sqrt{1-\norm{q}^2}\right)\right)+\mrm{div}\left(\frac{g(\nabla^1 q,\bar{q})}{1+\sqrt{1-\norm{q}^2}}\diff_z+\mrm{c.c.}\right)=0
\end{dcases}
\end{equation}
(where as usual the notation $\mrm{c.c.}$ denotes the complex conjugate of the term immediately before it), to be solved for $q\in\m{A}^0(K_\Sigma^2)$ and a K\"ahler form $\omega$ cohomologous to $\omega_0$, where all metric quantities are computed with respect to $\omega$. The vector field $g(\nabla^1 q,\bar{q})\diff_z$ is given by
\begin{equation*}
g(\nabla^1 q,\bar{q})\diff_z=g(\bdiff q,\bar{q})^\sharp=\left(g^{1\bar{1}}\right)^3\nabla_{\bar{1}}q_{11}\,q_{\bar{1}\bar{1}}\,\diff_z.
\end{equation*}

\subsection{The complex moment map}\label{sec:complex_mm}

Let us focus on the first equation, corresponding to the complex moment map.
\begin{lemma}
The kernel of the operator
\begin{equation*}
{\nabla^{1,0}}^*:\m{A}^{1,0}(\Sigma)\to\m{C}^\infty_0(\Sigma)
\end{equation*}
is the space $H^0(K_\Sigma)$.
\end{lemma}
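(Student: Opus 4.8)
The plan is to identify the operator ${\nabla^{1,0}}^*$ explicitly and to recognise its kernel as the solutions of the Cauchy--Riemann equation. Recall that the adjoint is taken with respect to the $L^2$ products induced by $\omega$, so that ${\nabla^{1,0}}^*\colon\m{A}^{1,0}(\Sigma)\to\m{C}^\infty_0(\Sigma)$ is characterised by $\langle\nabla^{1,0}f,\alpha\rangle_{L^2}=\langle f,{\nabla^{1,0}}^*\alpha\rangle_{L^2}$ for all $f\in\m{C}^\infty(\Sigma)$ and all $\alpha\in\m{A}^{1,0}(\Sigma)$. Since on functions the Levi-Civita connection is just the differential, $\nabla^{1,0}f=\diff f=\diff_z f\,dz$, the task reduces to computing the divergence-type operator dual to $\diff$.

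First I would work in a local holomorphic coordinate $z$, writing $\alpha=\alpha_1\,dz$ and taking the Riemannian volume form to be $\omega=\I\,g_{1\bar1}\,dz\wedge d\bar z$. Expanding the pairing gives $\langle\nabla^{1,0}f,\alpha\rangle_{L^2}=\int_\Sigma(\diff_z f)\,\overline{\alpha_1}\,\I\,dz\wedge d\bar z$, where the metric factors $g^{1\bar1}g_{1\bar1}$ cancel. Integrating by parts in $z$ and using Stokes' theorem on the closed surface $\Sigma$, together with $\diff_z\overline{\alpha_1}=\overline{\diff_{\bar z}\alpha_1}$, moves the derivative onto $\alpha_1$ and yields, up to sign and the positive factor $g^{1\bar1}$,
\[
{\nabla^{1,0}}^*\alpha=-\,g^{1\bar1}\,\diff_{\bar z}\alpha_1 .
\]
Thus ${\nabla^{1,0}}^*\alpha$ is, pointwise and up to a nowhere-vanishing factor, the single component of $\bdiff\alpha$; conceptually this reflects the fact that on a K\"ahler manifold the mixed Christoffel symbols vanish, so that the covariant $\bar z$-derivative of the component of a $(1,0)$-form coincides with the ordinary one, identifying the covariant divergence with the Cauchy--Riemann operator.

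The conclusion is then immediate: since $g^{1\bar1}>0$ everywhere, ${\nabla^{1,0}}^*\alpha=0$ if and only if $\diff_{\bar z}\alpha_1=0$, that is $\bdiff\alpha=0$, which for a smooth $(1,0)$-form on a compact Riemann surface is exactly the condition that $\alpha$ be holomorphic. Hence $\ker{\nabla^{1,0}}^*=H^0(K_\Sigma)$. As a check, this kernel has complex dimension $g(\Sigma)=\dim_{\bb C}H^0(K_\Sigma)$, in agreement with Hodge theory, and the same identification follows abstractly by writing $\ker{\nabla^{1,0}}^*=(\mathrm{im}\,\nabla^{1,0})^\perp$ and conjugating the Hodge decomposition of $\bdiff$ on $(0,1)$-forms. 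I expect the only delicate point to be the bookkeeping in the integration by parts---the signs, the metric factor $g^{1\bar1}$, and the vanishing of the mixed Christoffel symbols that underlies the identification with $\bdiff$---rather than any genuine analytic difficulty.
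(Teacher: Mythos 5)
Your proposal is correct and follows essentially the same route as the paper: both identify ${\nabla^{1,0}}^*$ in a local holomorphic coordinate as $\alpha\mapsto -g^{1\bar{1}}\diff_{\bar z}\alpha_1$, so that vanishing of ${\nabla^{1,0}}^*\alpha$ is equivalent to $\bdiff\alpha=0$, i.e.\ to $\alpha\in H^0(K_\Sigma)$. The only difference is that you derive the coordinate formula by integration by parts (and note the vanishing of the mixed Christoffel symbols), whereas the paper simply states it; this is a matter of detail, not of approach.
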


\begin{proof}
Let $\beta$ be a $(1,0)$--form. Then
\begin{equation*}
{\nabla^{1,0}}^*\beta=-g^{1\bar{1}}\diff_{\bar{z}}\beta_1
\end{equation*}
so ${\nabla^{1,0}}^*\beta=0$ if and only if $\bdiff\beta=0$.
\end{proof}
So in order to solve the complex moment map equation we can simply fix a holomorphic $1$-form $\beta$ and solve
\begin{equation}\label{eq:complex_mm_curve_abeliandiff}
{\nabla^{1,0}}^*q=\beta.
\end{equation}
In equation \eqref{eq:complex_mm_curve_abeliandiff}, ${\nabla^{1,0}}^*$ is the formal adjoint of
\begin{equation*}
\nabla^{1,0}\!:\m{A}^{1,0}(\Sigma)\to\Gamma(K^2_\Sigma).
\end{equation*}
Since ${\nabla^{1,0}}^*\!:\Gamma(K^2_\Sigma)\to\Gamma(K_\Sigma)$ is an elliptic operator, by the Fredholm alternative we know that there is a solution $q$ to equation \eqref{eq:complex_mm_curve_abeliandiff} if and only if $\beta$ is orthogonal to the kernel of $\nabla^{1,0}$.

\begin{lemma}
The kernel of $\nabla^{1,0}\!:\m{A}^0(K_\Sigma)\to\m{A}^0(K_\Sigma^2)$ is trivial.
\end{lemma}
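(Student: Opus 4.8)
The plan is to compute $\nabla^{1,0}$ in a local holomorphic coordinate, read off the kernel equation as a $\bdiff$-type first order condition, and then recognise a nonzero kernel element as a global \emph{holomorphic vector field} — which cannot exist once $g(\Sigma)>1$. Writing $\beta=\beta_1\,\diff z$ and $\omega=\I\,g_{1\bar{1}}\,\diff z\wedge\diff\bar{z}$, the only nonvanishing Christoffel symbol of the Levi-Civita connection is $\Gamma^1_{11}=\diff_z\log g_{1\bar{1}}$, so
\begin{equation*}
\nabla^{1,0}\beta=\left(\diff_z\beta_1-(\diff_z\log g_{1\bar{1}})\,\beta_1\right)\diff z\otimes\diff z\in\m{A}^0(K_\Sigma^2).
\end{equation*}
A short manipulation then shows that $\nabla^{1,0}\beta=0$ is equivalent to $\diff_z\!\left(g^{1\bar{1}}\beta_1\right)=0$, i.e.\ the locally defined function $g^{1\bar{1}}\beta_1$ is anti-holomorphic.

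The key step is to interpret this globally. Passing to the conjugate, $g^{1\bar{1}}\overline{\beta_{\bar 1}}$ is holomorphic in each chart, and this quantity is exactly the component of the metric dual $(\bar{\beta})^{\sharp}=g^{1\bar{1}}\,\overline{\beta_{\bar{1}}}\,\diff_z$ of the $(0,1)$--form $\bar\beta$. The index raising is a smooth (global) bundle isomorphism $\overline{K_\Sigma}\cong T_\Sigma$, so $V:=(\bar\beta)^{\sharp}$ is a well-defined global smooth vector field; the computation above says precisely that $\bdiff V=0$ on each chart, hence $V\in H^0(\Sigma,T_\Sigma)$. Since $g(\Sigma)>1$ we have $\deg T_\Sigma=2-2g(\Sigma)<0$, so $H^0(\Sigma,T_\Sigma)=0$; therefore $V=0$, which forces $\beta=0$. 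This also matches the previous lemma, whose kernel $H^0(K_\Sigma)$ is dual under the same considerations.

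I expect the only genuinely delicate point to be the global statement that the pointwise index-raising of $\bar\beta$ assembles into an honest holomorphic section of $T_\Sigma$; everything else is the routine Christoffel computation together with the elementary vanishing of holomorphic vector fields on a hyperbolic surface. An alternative would be an integration-by-parts argument comparing $\norm{\nabla^{1,0}\beta}^2_{L^2}$ and $\norm{\bdiff\beta}^2_{L^2}$ via a Weitzenb\"ock identity, whose curvature term is the Ricci form of $\omega$; I prefer the holomorphic vector field route because it is independent of any sign or positivity assumption on the curvature and hence applies verbatim to an arbitrary K\"ahler form $\omega$ cohomologous to $\omega_0$.
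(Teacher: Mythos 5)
Your proposal is correct and takes essentially the same route as the paper: both arguments convert a kernel element, by conjugating and raising the index with the metric, into a holomorphic vector field, which must vanish since a surface with $g(\Sigma)>1$ carries no nonzero holomorphic vector fields. Only fix the small index slip: the component of $(\bar\beta)^\sharp$ is $g^{1\bar{1}}\beta_{\bar{1}}=g^{1\bar{1}}\overline{\beta_1}$ (without the extra conjugation bar), which is precisely the conjugate of your anti-holomorphic local function $g^{1\bar{1}}\beta_1$.
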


\begin{proof}
Assume that $\eta$ is in the kernel of $\nabla^{1,0}\!:\m{A}^0(K_\Sigma)\to\m{A}^0(K_\Sigma^2)$, and let $X:=\bar{\eta}^\sharp\in\Gamma(T^{1,0}\Sigma)$. Then $\nabla^{0,1}\bar{\eta}=0$, but this happens if and only if
\begin{equation*}
0=\nabla_{\bar{1}}\eta_{\bar{1}}\,\mrm{d}\bar{z}^2=g_{1\bar{1}}\,\nabla_{\bar{1}}X^1\,\mrm{d}\bar{z}^2
\end{equation*}
if and only if $X$ is holomorphic. But since $g(\Sigma)>1$ there are no nonzero holomorphic vector fields on $\Sigma$, so $\eta=0$.
\end{proof}

Hence for all fixed $\beta$ there is a solution to equation \eqref{eq:complex_mm_curve_abeliandiff}. Moreover, there is a unique solution orthogonal to the kernel of ${\nabla^{1,0}}^*$, i.e. there is a unique solution of equation \eqref{eq:complex_mm_curve_abeliandiff} that is in the image of $\nabla^{1,0}$.

\begin{lemma}
The kernel of
\begin{equation*}
{\nabla^{1,0}}^*:\Gamma(K^2_\Sigma)\to\m{A}^{1,0}(\Sigma)
\end{equation*}
is the space of holomorphic quadratic differentials.
\end{lemma}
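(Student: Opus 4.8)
The plan is to proceed exactly as in the first lemma of this subsection, computing ${\nabla^{1,0}}^*$ explicitly in a local holomorphic coordinate $z$ and reducing the kernel condition to the vanishing of a $\bdiff$-operator. Writing a smooth quadratic differential as $q = q_{11}\,\mrm{d}z^2$, I would first determine the adjoint of $\nabla^{1,0}\colon\m{A}^{1,0}(\Sigma)\to\Gamma(K^2_\Sigma)$ by integration by parts against a $(1,0)$-form, using the K\"ahler volume form and the fibre metrics induced by $g$ on $K_\Sigma$ and $K^2_\Sigma$. This should yield a formula of the shape
\begin{equation*}
{\nabla^{1,0}}^* q = -g^{1\bar{1}}\,\nabla_{\bar{1}} q_{11}\,\mrm{d}z,
\end{equation*}
entirely parallel to the expression ${\nabla^{1,0}}^*\beta=-g^{1\bar{1}}\diff_{\bar{z}}\beta_1$ obtained above for $(1,0)$-forms, with the single covariant derivative in the antiholomorphic direction now acting on a section of $K^2_\Sigma$ rather than $K_\Sigma$.

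The key observation is that, on a K\"ahler manifold, the Levi-Civita connection is compatible with the holomorphic structure, so that its $(0,1)$-part acting on any holomorphic tensor bundle --- in particular on $K^2_\Sigma$ --- coincides with the Dolbeault operator $\bdiff$. Equivalently, in the coordinate $z$ the only nonvanishing Christoffel symbols are $\Gamma^1_{11}$ and its conjugate, so that $\nabla_{\bar{1}}q_{11} = \diff_{\bar{z}}q_{11}$. Hence ${\nabla^{1,0}}^* q = 0$ holds if and only if $g^{1\bar{1}}\diff_{\bar{z}}q_{11} = 0$, and since $g^{1\bar{1}}$ is nowhere vanishing this is equivalent to $\diff_{\bar{z}}q_{11}=0$, i.e. to $\bdiff q = 0$. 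Thus the kernel of ${\nabla^{1,0}}^*$ on $\Gamma(K^2_\Sigma)$ is precisely the space $H^0(K^2_\Sigma)$ of holomorphic quadratic differentials.

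I do not expect any genuine obstacle here: the statement is the natural analogue, one degree up, of the first lemma, and the argument is essentially bookkeeping. The only point requiring mild care is the computation of the formal adjoint with the correct metric factors and sign, together with the verification that the Christoffel terms drop out of the antiholomorphic derivative --- both of which follow from the K\"ahler condition exactly as in the $(1,0)$-form case already treated.
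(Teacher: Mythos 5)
Your proposal is correct and follows essentially the same route as the paper: compute ${\nabla^{1,0}}^*q=-g^{1\bar{1}}\nabla_{\bar{1}}q_{11}\,\mrm{d}z$ in a local coordinate, use the K\"ahler condition (vanishing of the mixed Christoffel symbols) to identify $\nabla_{\bar{1}}q_{11}$ with $\diff_{\bar{z}}q_{11}$, and conclude that the kernel condition is exactly $\bdiff q=0$. The paper's proof is just a terser version of this same coordinate computation.
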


\begin{proof}
Just compute in coordinates:
\begin{equation*}
{\nabla^{1,0}}^*q=-g^{1\bar{1}}\nabla_{\bar{1}}q_{11}\,\mrm{d}z=-g^{1\bar{1}}\diff_{\bar{z}}q_{11}\,\mrm{d}z
\end{equation*}
so ${\nabla^{1,0}}^*q=0$ if and only if $\diff_{\bar{z}}q_{11}=0$.
\end{proof}

Bringing together these facts, we deduce that for any holomorphic $1$-form $\beta$, any solution $q$ of \eqref{eq:complex_mm_curve_abeliandiff} can be written as
\begin{equation}
q=\tau+\nabla^{1,0}\eta(\beta)
\end{equation} 
where $\tau$ is a holomorphic quadratic differential and $\eta(\beta)$ is the unique $(1,0)$--form that solves
\begin{equation*}
{\nabla^{1,0}}^*\nabla^{1,0}\eta=\beta.
\end{equation*}
Of course $\eta(\beta)$ can be written as $\eta=G(\beta)$, where $G$ is the Green's operator associated to the elliptic operator ${\nabla^{1,0}}^*\nabla^{1,0}\!:\Gamma(K_\Sigma)\to\Gamma(K^2_\Sigma)$. So the set of solutions to the complex moment map equation can be written as the $(4g-3)$--dimensional complex vector space 
\begin{equation*}
\m{V}=\set*{\tau+\nabla^{1,0}G(\beta)\tc\beta\in H^0(K_\Sigma)\mbox{ and }\tau\in H^0(K^2_\Sigma)}.
\end{equation*}
The solutions to the complex moment map equation considered in \cite{Donaldson_hyperkahler} and \cite{Hodge_phd_thesis} form a codimension-$g$ vector subspace of $\m{V}$ and correspond to setting $\beta = 0$.


Let $L\!: \Gamma(K_\Sigma) \to \Gamma(K_\Sigma)$ be the self--adjoint elliptic operator defined by $L(\varphi)={\nabla^{1,0}}^*\nabla^{1,0}\varphi$. The standard Schauder estimates for elliptic operators on $\m{C}^{k,\alpha}(M,\omega)$ tell us that there is a constant $C=C(\omega,\alpha,k)$ such that
\begin{equation}\label{eq:elliptic_estimate_nablanabla}
\norm{\varphi}_{k,\alpha}\leq C\left(\norm{L\varphi}_{k-2,\alpha}+\norm{\varphi}_0\right),
\end{equation}
so for the Green operator we have
\begin{lemma}\label{lemma:stime_operatore_Green}
Let $\beta\in\m{A}^{1,0}(\Sigma)$, and let $\eta\in\m{A}^{1,0}(\Sigma)$ be the unique solution to
\begin{equation*}
{\nabla^{1,0}}^*\nabla^{1,0}\eta=\beta.
\end{equation*}
Then, for every $k\geq 2$
\begin{equation*}
\norm{\eta}_{k,\alpha}\leq K\norm{\beta}_{k-2,\alpha}
\end{equation*}
for some constant $K>0$ that does not depend on $\eta$, $\beta$.
\end{lemma}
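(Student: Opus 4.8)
The plan is to combine the Schauder estimate \eqref{eq:elliptic_estimate_nablanabla} with the injectivity of $L$ established above. Applying \eqref{eq:elliptic_estimate_nablanabla} directly to $\eta$ gives
\[
\norm{\eta}_{k,\alpha}\leq C\left(\norm{L\eta}_{k-2,\alpha}+\norm{\eta}_0\right)=C\left(\norm{\beta}_{k-2,\alpha}+\norm{\eta}_0\right),
\]
so everything reduces to absorbing the zeroth-order term, i.e.\ to establishing an a priori bound $\norm{\eta}_{k,\alpha}\leq K\norm{L\eta}_{k-2,\alpha}$ with $K$ independent of $\eta$. This is the only substantive content of the statement; the rest is the formal Schauder theory already recorded.

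I would prove this clean estimate by a standard contradiction-and-compactness argument. Suppose no such $K$ exists; then there is a sequence $\eta_n\in\m{A}^{1,0}(\Sigma)$ with $\norm{\eta_n}_{k,\alpha}=1$ but $\norm{L\eta_n}_{k-2,\alpha}\to 0$. Since $\Sigma$ is compact, the embedding $\m{C}^{k,\alpha}\hookrightarrow\m{C}^0$ is compact, so after passing to a subsequence $\eta_n$ is Cauchy in $\m{C}^0$. Applying \eqref{eq:elliptic_estimate_nablanabla} to the differences,
\[
\norm{\eta_n-\eta_m}_{k,\alpha}\leq C\left(\norm{L\eta_n-L\eta_m}_{k-2,\alpha}+\norm{\eta_n-\eta_m}_0\right),
\]
and both terms on the right tend to zero, so $\eta_n$ is in fact Cauchy in $\m{C}^{k,\alpha}$. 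Its limit $\eta_\infty$ then satisfies $\norm{\eta_\infty}_{k,\alpha}=1$ and $L\eta_\infty=0$.

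The key point is that such an $\eta_\infty$ cannot exist. Integrating by parts, $\langle L\eta,\eta\rangle=\norm{\nabla^{1,0}\eta}^2$, so the kernel of $L$ coincides with the kernel of $\nabla^{1,0}\!:\m{A}^0(K_\Sigma)\to\m{A}^0(K_\Sigma^2)$, which is trivial by the Lemma above (there are no nonzero holomorphic vector fields on $\Sigma$ since $g(\Sigma)>1$). This contradiction yields the uniform estimate $\norm{\eta}_{k,\alpha}\leq K\norm{L\eta}_{k-2,\alpha}=K\norm{\beta}_{k-2,\alpha}$, which is exactly the claim. I do not expect any serious obstacle: the only mildly delicate step is the bootstrap from $\m{C}^0$- to $\m{C}^{k,\alpha}$-convergence, which is handled cleanly by feeding the compactness back into \eqref{eq:elliptic_estimate_nablanabla}, and the decisive ingredient is simply the injectivity of $L$ already available from the triviality of $\ker\nabla^{1,0}$.
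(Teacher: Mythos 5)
Your proof is correct, and it follows the same overall strategy as the paper's --- reduce via the Schauder estimate \eqref{eq:elliptic_estimate_nablanabla} to an a priori bound absorbing the $\m{C}^0$-term, then argue by contradiction and compactness --- but it closes the contradiction by a genuinely different mechanism. The paper normalizes so that $\norm{G\psi_n}_0=1$ while $\norm{\psi_n}_{k-2,\alpha}\to 0$, extracts a uniform limit $\vartheta$ of $G\psi_n$ by Arzel\`a--Ascoli, and shows $\vartheta=0$ \emph{without ever passing to the limit in the equation}: it computes $\norm{\vartheta}_{L^2}^2=\lim\left\langle G\psi_n,LG\vartheta\right\rangle_{L^2}=\lim\left\langle \psi_n,G\vartheta\right\rangle_{L^2}=0$, using only the self-adjointness of $L$ and $\psi_n\to 0$; the triviality of the kernel enters only implicitly, through the Green operator identity $LG=\mathrm{id}$. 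You normalize instead in $\m{C}^{k,\alpha}$, upgrade $\m{C}^0$-convergence to $\m{C}^{k,\alpha}$-convergence by running the Schauder estimate on differences (a step you genuinely need, since your contradiction requires $\norm{\eta_\infty}_{k,\alpha}=1$ to survive the limit), pass to the limit in the equation to get $L\eta_\infty=0$, and then invoke the injectivity of $L$ explicitly, via $\left\langle L\eta,\eta\right\rangle_{L^2}=\norm{\nabla^{1,0}\eta}_{L^2}^2$ together with the paper's earlier lemma that $\ker\nabla^{1,0}$ is trivial when $g(\Sigma)>1$ (your appeal to this is legitimate: $\eta_\infty\in\m{C}^{k,\alpha}$ with $k\geq 2$, so the integration by parts is justified). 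Your route is the more standard textbook argument and makes the role of kernel triviality transparent; the paper's duality computation is slightly slicker, avoiding both the convergence upgrade and the limit equation. Both arguments are complete.
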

This result is analogous to \cite[Proposition $2.3$]{KodairaMorrow}. The proof there is relative to the Green operator associated to the Laplacian, but it also goes through in our situation; the key points are an elliptic estimate, the linearity of the operator and its self-adjointness. We give a proof of Lemma \ref{lemma:stime_operatore_Green} anyway, for completeness.
\begin{proof}
Let as before $L:={\nabla^{1,0}}^*\nabla^{1,0}$. By the elliptic estimate \eqref{eq:elliptic_estimate_nablanabla} we have, for any $\varphi$
\begin{equation*}
\norm{G\varphi}_{k,\alpha}\leq C\left(\norm{\varphi}_{k-2,\alpha}+\norm{G\varphi}_0\right)
\end{equation*}
so it will be enough to show that there is a constant $C'$ such that $\norm{G\varphi}_0\leq C'\norm{\varphi}_{k-2,\alpha}$ for every $\varphi$. Assume that this is not the case. Then we can find a sequence $\varphi_n$ such that
\begin{equation*}
\frac{\norm{G\varphi_n}_0}{\norm{\varphi_n}_{k-2,\alpha}}\to\infty
\end{equation*}
so the sequence $\psi_n:=\frac{1}{\norm{G\varphi_n}_0}\varphi_n$ satisfies
\begin{equation*}
\norm{G\psi_n}_0=1\mbox{ and }\norm{\psi_n}_{k-2,\alpha}\to 0.
\end{equation*}
In particular, together with the elliptic estimate, this implies
\begin{equation*}
\norm{G\psi_n}_{k,\alpha}\leq K
\end{equation*}
for some constant $K$. By Ascoli-Arzelà Theorem we can assume that there is a $\vartheta$ such that for every $h\leq k$ we have uniform convergence $\nabla^hG\psi_n\to\nabla^h\vartheta$, up to choosing a subsequence of $\set{\psi_n}$. Then:
\begin{equation*}
\norm{\vartheta}_{L^2}^2=\lim \left\langle G\psi_n,\eta\right\rangle_{L^2}=\lim\left\langle G\psi_n,LG\vartheta\right\rangle_{L^2}=\lim\left\langle LG\psi_n,G\vartheta\right\rangle_{L^2}=\lim\left\langle\psi_n,G\vartheta\right\rangle_{L^2}=0
\end{equation*}
since $\psi_n\to 0$ in $\m{C}^{k-2,\alpha}$. But this is a contradiction: indeed $\norm{\vartheta}_0=\lim\norm{G\psi_n}_0=1$.
\end{proof}

In particular we deduce from Lemma \ref{lemma:stime_operatore_Green} that for every $\alpha\in(0,1)$, if ${\nabla^{1,0}}^*\nabla^{1,0}\eta=\beta$ then
\begin{equation*}
\norm{\nabla^{1,0}_\omega\eta}_0\leq\norm{\eta}_{2,\alpha}\leq \tilde{C}\norm{\beta}_{0,\alpha}.
\end{equation*}
So for $q=\tau+\nabla^{1,0}\eta$ we see that if for some $\alpha$ the $\m{C}^{0,\alpha}(\omega)$--norms of $\tau$, $\beta$ are small enough then we also have $\norm{q}_0^2<1$, as required by the real moment map equation.

\begin{nota}
Let us consider what happens when $g(\Sigma) \leq 1$, that is, when $\Sigma=\bb{CP}^1$ or $\Sigma=\bb{C}/\Lambda$ for a lattice $\Lambda<\bb{C}$.

In the first case $\Sigma=\bb{CP}^1$ there are no holomorphic $1$-forms or holomorphic quadratic differentials, so the only solution to the complex moment map equation is $q=0$ and the HcscK system reduces to the cscK equation.

When $\Sigma$ is a torus, if we consider systems of coordinates on $\Sigma$ induced by affine coordinates on $\bb{C}$ via the projection $\bb{C}\rightarrow\bb{C}/\Lambda$, then holomorphic objects on $\Sigma$ have constant coefficients. It is immediate then to see, by the Fredholm alternative for ${\nabla^{1,0}}^*$, that the equation ${\nabla^{1,0}}^*q=\beta$ can be solved precisely when the holomorphic form $\beta$ is $0$. In this case then $q$ must be a holomorphic quadratic differential.

Hence, by fixing an affine coordinate $z$ on the torus we see that the real moment map equation is satisfied if and only if
\begin{equation*}
\Delta\,\mrm{log}\left(g_{1\bar{1}}\left(1+\sqrt{1-(g^{1\bar{1}})^2q_{11}q_{\bar{1}\bar{1}}}\right)\right)=0
\end{equation*}
since $g_{1\bar{1}}$ can be regarded as a (global) positive function on $\Sigma$ and $q_{11}$ is a constant. But then $g_{1\bar{1}}\left(1+\sqrt{1-(g^{1\bar{1}})^2q_{11}q_{\bar{1}\bar{1}}}\right)$ must be a constant, and this happens only if $g$ is the flat metric in its class. So, even for $g(\Sigma)=1$, the HcscK equations essentially reduce to the cscK equation.
\end{nota}

\subsection{A change of variables}

The upshot of the previous section is that a unique solution $q$ to the complex moment map equation can always be found, for a fixed metric $\omega$, by prescribing two parameters $\tau\in H^0(K_\Sigma^2)$, $\beta\in H^0(K_\Sigma)$. The corresponding $q(\omega,\tau,\beta)$ is given by
\begin{equation*}
q=\tau+\nabla^{1,0}\eta(\beta)
\end{equation*}
where $\eta(\beta)$ is the unique solution of ${\nabla^{1,0}}^*\nabla^{1,0}\eta=\beta$, so our system becomes
\begin{equation}\label{eq:sistema_HCSCK_beta}
\begin{dcases}
{\nabla^{1,0}}^*q=\beta;\\
2\,s(\omega)-2\,\widehat{s}+\Delta\left(\mrm{log}\left(1+\sqrt{1-\norm{q}^2}\right)\right)-\mrm{div}\left(\frac{\bar{q}\left(-,\beta^\sharp\right)^\sharp}{1+\sqrt{1-\norm{q}^2}}+\mrm{c.c.}\right)=0;\\
\norm{q}^2_\omega<1.
\end{dcases}
\end{equation}

In order to study the real moment map equation we take an approach analogous to the one in \cite{Donaldson_hyperkahler}, by performing a suitable change of variables. 

Let $F:=1+\sqrt{1-\norm{q}^2_\omega}$, and consider the K\"ahler form $\tilde{\omega}:=F\,\omega$. Notice that $\omega$ can be recovered from $\tilde{\omega}$ and $q$, by $\omega=\frac{1}{2}\left(1+\norm{q}^2_{\tilde{\omega}}\right)\tilde{\omega}$. Indeed, a quick computation shows that
\begin{equation*}
\frac{2}{1+\sqrt{1-\norm{q}^2_\omega}}=1+\frac{\norm{q}^2_\omega}{\left(1+\sqrt{1-\norm{q}^2_\omega}\right)^2}=1+\norm{q}^2_{\tilde{\omega}}
\end{equation*}
so that $F^{-1}=\frac{1}{2}(1+\norm{q}^2_{\tilde{\omega}})$. We also have the following identities:
\begin{equation*}
s(\omega)=F\,s(\tilde{\omega})-\frac{1}{2}\Delta(\mrm{log}\,F);
\end{equation*}
\begin{equation*}
\begin{split}
\mrm{div}_\omega&\left(\frac{\bar{q}\left(-,\beta^\sharp\right)^\sharp}{F}+\mrm{c.c.}\right)=\mrm{div}_\omega\left(F\,\bar{q}\left(-,\beta^{\tilde{\sharp}}\right)^{\tilde{\sharp}}+\mrm{c.c.}\right)=\\
&=\mrm{div}_{\tilde{\omega}}\left(F\,\bar{q}\left(-,\beta^{\tilde{\sharp}}\right)^{\tilde{\sharp}}+\mrm{c.c.}\right)-\frac{1}{F}\left(F\,\bar{q}\left(-,\beta^{\tilde{\sharp}}\right)^{\tilde{\sharp}}(F)+\mrm{c.c.}\right)=\\
&=F\,\mrm{div}_{\tilde{\omega}}\left(\bar{q}\left(-,\beta^{\tilde{\sharp}}\right)^{\tilde{\sharp}}+\mrm{c.c.}\right)
\end{split}
\end{equation*}
so that $\omega$ solves the second equation in \eqref{eq:sistema_HCSCK_beta} if and only if $\tilde{\omega}$ solves
\begin{equation*}
2\,s(\tilde{\omega})-\frac{2}{F}\widehat{s}-\mrm{div}_{\tilde{\omega}}\left(\bar{q}\left(-,\beta^{\tilde{\sharp}}\right)^{\tilde{\sharp}}+\mrm{c.c.}\right)=0,
\end{equation*}
if and only if
\begin{equation*}
2\,s(\tilde{\omega})-\widehat{s}\left(1+\norm{q}^2_{\tilde{\omega}}\right)-\mrm{div}_{\tilde{\omega}}\left(\bar{q}\left(-,\beta^{\tilde{\sharp}}\right)^{\tilde{\sharp}}+\mrm{c.c.}\right)=0.
\end{equation*}
These computations show that $\omega,q$ solve the HcscK system if and only if $\tilde{\omega},q$ solve
\begin{equation}\label{eq:HcscK_trasformata}
\begin{dcases}
\frac{2}{1+\norm{q}^2_{\tilde{\omega}}}{\nabla^{1,0}_{\tilde{\omega}}}^*q=\beta;\\
2\,s(\tilde{\omega})-\widehat{s}\left(1+\norm{q}^2_{\tilde{\omega}}\right)-\mrm{div}_{\tilde{\omega}}\left(\bar{q}\left(-,\beta^{\tilde{\sharp}}\right)^{\tilde{\sharp}}+\mrm{c.c.}\right)=0;\\
\norm{q}^2_{\tilde{\omega}}<1.
\end{dcases}
\end{equation}
We can use the first equation in \eqref{eq:HcscK_trasformata} to write the second one as
\begin{equation*}
2\,s(\tilde{\omega})-\widehat{s}\left(1+\norm{q}^2_{\tilde{\omega}}\right)
-\left(\tilde{g}(\bar{q},\nabla^{1,0}\beta)+\mrm{c.c.}\right)+(1+\norm{q}^2_{\tilde{\omega}})\norm{\beta}^2_{\tilde{\omega}}=0
\end{equation*}
or equivalently, after a little simplification, 
\begin{equation}
2\,s(\tilde{\omega})+\left(-\widehat{s}+\norm{\beta}^2_{\tilde{\omega}}\right)\left(1+\norm{q}^2_{\tilde{\omega}}\right)
-\left(\tilde{g}(\bar{q},\nabla^{1,0}\beta)+\mrm{c.c.}\right)=0.
\end{equation}

\subsection{The equations for a conformal potential}

In order to solve our equations \eqref{eq:HcscK_trasformata} we take the standard approach of fixing a reference K\"ahler form, still denoted by $\tilde{\omega}$, and of looking for solutions in its conformal class, that is, of the form $e^{f}\tilde{\omega}$. A straightforward computation shows that our equations written in terms of the unknown $f$ become
\begin{equation}\label{eq:HcscK_trasf_conf_pot}
\begin{dcases}
\frac{2\,\mrm{e}^{-f}}{1+\norm{\mrm{e}^{-f}q}^2_{\tilde{\omega}}}{\nabla^{1,0}_{\tilde{\omega}}}^*q=\beta;\\
2\,s(\tilde{\omega})+\Delta_{\tilde{\omega}}f+\left(-\mrm{e}^f\widehat{s}+\norm{\beta}^2_{\tilde{\omega}}\right)\left(1+\norm{\mrm{e}^{-f}q}^2_{\tilde{\omega}}\right)-\left(\tilde{g}(\mrm{e}^{-f}\bar{q},\nabla^{1,0}_{\tilde{\omega}}\beta-\beta\otimes\diff f)+\mrm{c.c.}\right)=0;\\
\norm{\mrm{e}^{-f}q}^2_{\tilde{\omega}}<1.
\end{dcases}
\end{equation}
Here $\widehat{s}=\widehat{s(\omega)}$ is still computed using the original metric $\omega$.

Of course we may also do things in the opposite order: we can first write our original system \eqref{eq:HcscK_system_curve} in terms of a conformal factor and then perform the change of independent variables described in the previous section. In fact this yields the same equations \eqref{eq:HcscK_trasf_conf_pot}. To see this write \eqref{eq:HcscK_system_curve} in terms of a reference K\"ahler form, still denoted by $\omega$, and a conformal metric $\omega_f=\mrm{e}^f\omega$, giving   
\begin{equation}\label{eq:HcscK_system_curve_confpot}
\begin{dcases}
{\nabla^{1,0}_{\omega}}^*q=\mrm{e}^f\beta;\\
2\,s(\omega)+\Delta_\omega(f)-2\,\mrm{e}^f\widehat{s(\omega)}+\Delta_{\omega}\left(\mrm{log}\left(1+\sqrt{1-\norm{\mrm{e}^{-f}q}_{\omega}^2}\right)\right)-\mrm{div}_{\omega}\left(\frac{\mrm{e}^{-f}\,\bar{q}\left(-,\beta^\sharp\right)^\sharp}{1+\sqrt{1-\norm{\mrm{e}^{-f}q}_{\omega}^2}}+\mrm{c.c.}\right)=0;\\
\norm{\mrm{e}^{-f}q}^2_{\omega}<1.
\end{dcases}
\end{equation}
Notice first of all that if $\omega_f$ satisfies the second equation in \eqref{eq:HcscK_system_curve_confpot} then $\omega_f$ is necessarily in the same K\"ahler class of $\omega$, since the constant which appears is $\widehat{s(\omega)}$ rather than $\widehat{s(\omega_f)}$. Now we can rewrite this system in terms of $\omega' =\left(1+\sqrt{1-\norm{\mrm{e}^{-f}q}^2_\omega}\right)\omega$ and a computation shows that this is the same as \eqref{eq:HcscK_trasf_conf_pot}, with $\tilde{\omega}$ replaced by $\omega'$.

The upshot of this observation is that there is a bijection between the solutions of \eqref{eq:HcscK_trasf_conf_pot} and those of \eqref{eq:HcscK_system_curve_confpot}, given by mapping $(q, \mrm{e}^f \tilde{\omega})$ to $(q, \mrm{e}^f \omega)$, and a solution $\mrm{e}^f\omega$ is automatically cohomologous to the original metric $\omega$. In particular the ``complex moment map" equation in \eqref{eq:HcscK_trasf_conf_pot}, that is
\begin{equation}\label{eq:trasf_complex_mm_confpot}
\frac{2\,\mrm{e}^{-f}}{1+\norm{\mrm{e}^{-f}q}^2_{\tilde{\omega}}}{\nabla^{1,0}_{\tilde{\omega}}}^*q=\beta
\end{equation}
is equivalent to $\mrm{e}^{-f}{\nabla^{1,0}_\omega}^*q=\beta$, which we already solved in Section \ref{sec:complex_mm}. 

\section{A continuity method}\label{ContinuitySec}

In the previous Section we showed that the original HcscK system is equivalent to \eqref{eq:HcscK_trasf_conf_pot}. We will solve this system, under appropriate conditions on $\tau$ and $\beta$, by using a continuity method. 

It is convenient to change our notation for the background metric, appearing in \eqref{eq:HcscK_trasf_conf_pot}, denoting it simply by $\omega$. We take the background metric $\omega$ to have constant negative Gauss curvature. Without loss of generality we can normalize $\omega$ so that the constant $\widehat{s}$ in \eqref{eq:HcscK_trasf_conf_pot} is equal to $-2$, and we consider the family of equations \eqref{eq:HcscK_continuity} parametrized by $t\in[0,1]$,
\begin{equation}
\tag{$\star_t$}\label{eq:HcscK_continuity}
\begin{dcases}
-2+\Delta f_t+\left(2\,\mrm{e}^{f_t}+\norm{t\beta}^2\right)\left(1+\mrm{e}^{-2\,f_t}\norm{q_t}^2\right)-\left(g(\mrm{e}^{-f_t}\bar{q_t},\nabla^{1,0}\left(t\beta\right)-(t\beta)\otimes\diff f_t)+\mrm{c.c.}\right)=0\\
\mrm{e}^{-2\,f_t}\norm{q_t}^2<1
\end{dcases}
\end{equation}
where $q_t =t\tau+\nabla^{1,0}\eta_t$ solves
\begin{equation*}
\frac{2\,\mrm{e}^{-f_t}}{1+\mrm{e}^{-2f_t}\norm{q_t}^2}{\nabla^{1,0}}^*q_t=t\,\beta.
\end{equation*}
Here all metric quantities are computed with respect to the background $\omega$, as usual.

For $t=0$ we have the solution $f\equiv 0$ to $(\star_0)$, and we propose to show that, under some boundedness assumptions of $\tau$, $\beta$, $\nabla\beta$, we can find a solution $f$ to $(\star_1)$. To prove closedness of the continuity method we need \textit{a priori} $\m{C}^{k,\alpha}$-estimates on $f_t$ and $q_t$, for some $k\geq 2$ and some $0<\alpha<1$. Moreover, crucially, we also need to show that the open condition $\mrm{e}^{-2\,f_t}\norm{q_t}^2<1$ is also closed.

As a preliminary step we first establish such estimates on the quadratic differential $q$, along the continuity path, in terms of given H\"older bounds on $\tau$, $\beta$, and a H\"older bound on $f$. The latter will be then proved in the following sections. In what follows all metric quantities are computed with respect to $\omega$. We already showed that a solution $q$ to \eqref{eq:trasf_complex_mm_confpot} can be decomposed as $q=\tau+\nabla^{1,0} \eta$ for some $\eta\in\m{A}^{1,0}(\Sigma)$ and $\tau\in H^0(K^2_\Sigma)=\mrm{ker}({\nabla^{1,0}}^*)$. Thus $\eta$ solves the equation
\begin{equation*}
\frac{2\,\mrm{e}^{-f}}{1+\mrm{e}^{-2f}\norm{\tau+\nabla^{1,0} \eta}^2 }{\nabla^{1,0} }^*\nabla^{1,0} \eta=\beta.
\end{equation*}
We write this in the form
\begin{equation*}
{\nabla^{1,0} }^* \nabla^{1,0}  \eta=\frac{1}{2}\beta\left(\mrm{e}^f+\mrm{e}^{-f}\norm{\tau+\nabla^{1,0} \eta}^2 \right)
\end{equation*}
and use the standard estimate given in Lemma \ref{lemma:stime_operatore_Green} to show that for all $k\geq 2$ and $\alpha\in(0,1)$ there are constants $C, C' > 0$ such that
\begin{equation*}
\begin{split}
\norm{\eta}_{k,\alpha}&\leq C\norm{\beta}_{k-2,\alpha}\left(\norm{\mrm{e}^f}_{k-2,\alpha}+\norm{\mrm{e}^{-f}}_{k-2,\alpha}\norm*{\norm{\tau+\nabla^{1,0} \eta}^2 }_{k-2,\alpha}\right) \\
&\leq C\mrm{e}^{\norm{f}_{k-2,\alpha}}\norm{\beta}_{k-2,\alpha}\left(1+C'\norm*{\tau+\nabla^{1,0}\eta}^2_{k-2,\alpha}\right) \\
&\leq C\mrm{e}^{\norm{f}_{k-2,\alpha}}\norm{\beta}_{k-2,\alpha}\left(1+C'\left(\norm{\tau}^2_{k-2,\alpha}+\norm{\eta}^2_{k,\alpha}+2\norm{\tau}_{k-2,\alpha}\norm{\eta}_{k,\alpha}\right)\right).
\end{split}
\end{equation*}
Note that going from the first to the second inequality involves a short computation using that $\nabla$ is the Levi-Civita connection. In this estimate only the constant $C$ depends on $\omega$, and the dependence is only through the elliptic constant $K$ appearing in Lemma \ref{lemma:stime_operatore_Green}. We can rewrite this inequality in the form
\begin{equation*}
\norm{\eta}_{k,\alpha}\leq c+b\norm{\eta}_{k,\alpha}+a\norm{\eta}^2_{k,\alpha}
\end{equation*}
where $a$, $b$ and $c$ are functions of $\norm{f}_{k-2,\alpha}$, $\norm{\beta}_{k-2,\alpha}$ and $\norm{\tau}_{k-2,\alpha}$, which can be made explicit in terms of the elliptic constant $K$, and become arbitrarily small if $\norm{\beta}_{k-2,\alpha}$ is small enough, depending on $K$. So if $\norm{f}_{k-2,\alpha}$, $\norm{\beta}_{k-2,\alpha}$ and $\norm{\tau}_{k-2,\alpha}$ satisfy a suitable bound, which only depends on $\omega$ through $K$, then we have $1-b>0$ and $(1-b)^2-4ac>0$, and we find
\begin{equation*}
0\leq\norm{\eta}_{k,\alpha}\leq\frac{1-b-\sqrt{(1-b)^2-4ac}}{2a}\quad\mbox{ or }\quad\norm{\eta}_{k,\alpha}\geq\frac{1-b+\sqrt{(1-b)^2-4ac}}{2a}.
\end{equation*}
Since for $\beta=0$ the only solution to our equation is $\eta=0$, along the continuity path \eqref{eq:HcscK_continuity} we obtain the  bounds
\begin{equation}\label{eq:stime_eta_cambiamentocoordinate_sintetica}
\norm{\eta}_{k,\alpha}\leq\frac{1-b-\sqrt{(1-b)^2-4ac}}{2a}.
\end{equation}
In particular, for $k=2$ we get bounds on $\norm{\eta}_0$ in terms of the $\m{C}^{0,\alpha}$-norms of $\beta$, $\tau$, $f$. The bound \eqref{eq:stime_eta_cambiamentocoordinate_sintetica} on $\eta$ may be written more explicitly as
\begin{equation}\label{eq:stime_eta_cambiamentocoordinate}
\norm{\eta}_{k,\alpha}\leq C\mrm{e}^{\norm{f}_{k-2,\alpha}}\norm{\beta}_{k-2,\alpha}+O(\norm{\beta}_{k-2,\alpha}\norm{\tau}_{k-2,\alpha}),
\end{equation}
(where the $O$ term depends on the background $\omega$ only through the constant $K$), and holds as long as
\begin{equation*}
C\,C'\,\mrm{e}^{\norm{f}_{k-2,\alpha}}\norm{\beta}_{k-2,\alpha}\norm{\tau}_{k-2,\alpha}<1
\end{equation*}
and
\begin{equation*}
C\,C'\,\mrm{e}^{\norm{f}_{k-2,\alpha}}\norm{\beta}_{k-2,\alpha}\left(\norm{\beta}_{k-2,\alpha}  C\,\mrm{e}^{\norm{f}_{k-2,\alpha}} \left(3 C' \norm{\tau}_{k-2,\alpha}^2+4\right)+2\norm{\tau}_{k-2,\alpha} \right)<1.
\end{equation*}

\subsection{Estimates along the continuity method}\label{EstimatesSec}
We now proceed to establish H\"older bounds on the conformal potential $f$.
\subsubsection{$\m{C}^0$-estimates}

Let $(q,f)$ be a solution to \eqref{eq:HcscK_trasf_conf_pot}. Then, at a point at which $f$ attains its maximum we have 
\begin{equation*}
-2+\left(2\,\mrm{e}^f+\norm{\beta}^2\right)\left(1+\mrm{e}^{-2\,f}\norm{q}^2\right)-2\,\mrm{Re}\left(g\left(\mrm{e}^{-f}\bar{q},\nabla^{1,0}\beta\right)\right)\leq 0
\end{equation*}
(recall that our convention is $\Delta=-\mrm{div}\,\mrm{grad}$, so that $\Delta(f)$ is \emph{positive} where $f$ attains its maximum). As we are assuming $\mrm{e}^{-2\,f}\norm{q}^2<1$, by the Cauchy--Schwarz inequality we have
\begin{equation*}
\card*{\mrm{Re}\left(g\left(\mrm{e}^{-f}\bar{q},\nabla^{1,0}\beta\right)\right)}\leq\norm{\nabla^{1,0}\beta}
\end{equation*}
and so, at a maximum of $f$
\begin{equation*}
0\geq -2+\left(2\,\mrm{e}^f+\norm{\beta}^2\right)\left(1+\mrm{e}^{-2\,f}\norm{q}^2\right)-2\,\mrm{Re}\left(g\left(\mrm{e}^{-f}\bar{q},\nabla^{1,0}\beta\right)\right)\geq  -2+\left(2\,\mrm{e}^f+\norm{\beta}^2\right)-2\norm{\nabla^{1,0}\beta}
\end{equation*}
hence we find that
\begin{equation*}
\mrm{e}^f\leq 1+\norm{\nabla^{1,0}\beta}.
\end{equation*}

Similarly, at a point of minimum of $f$ we find
\begin{equation*}
-2+\left(2\,\mrm{e}^f+\norm{\beta}^2\right)\left(1+\mrm{e}^{-2\,f}\norm{q}^2\right)-2\,\mrm{Re}\left(g\left(\mrm{e}^{-f}\bar{q},\nabla^{1,0}\beta\right)\right)\geq 0.
\end{equation*}
The same estimates then imply
\begin{equation*}
0\leq-2+\left(2\,\mrm{e}^f+\norm{\beta}^2\right)\left(1+\mrm{e}^{-2\,f}\norm{q}^2\right)-2\,\mrm{Re}\left(g\left(\mrm{e}^{-f}\bar{q},\nabla^{1,0}\beta\right)\right)\leq -2+2\left(2\,\mrm{e}^f+\norm{\beta}^2\right)+2\,\norm{\nabla^{1,0}\beta}
\end{equation*}
so that
\begin{equation*}
2\,\mrm{e}^f\geq 1-\norm{\nabla^{1,0}\beta}-\norm{\beta}^2.
\end{equation*}
If $\beta$ is chosen in such a way that $\norm{\nabla^{1,0}\beta}+\norm{\beta}^2\leq 1-2\varepsilon$ then $\mrm{e}^f$ is uniformly bounded away from $0$ by $\varepsilon$, and we have a $\m{C}^0$-bound for solutions of $(\star_1)$ (and similarly for solutions of any \eqref{eq:HcscK_continuity}).

\subsubsection{$L^4$-bounds on the gradient and the Laplacian}

Our $\m{C}^0$-bound on $f$ can be used to obtain an estimate for the $L^2$-norm of $\mrm{d}f$. Since $f$ solves
\begin{equation*}
\Delta(f)-2+\left(2\,\mrm{e}^f+\norm{\beta}^2\right)\left(1+\mrm{e}^{-2\,f}\norm{q}^2\right)-2\,\mrm{Re}\left(g\left(\mrm{e}^{-f}\bar{q},\nabla^{1,0}\beta\right)\right)+2\,\mrm{Re}\left(g\left(\mrm{e}^{-f}\bar{q},\beta\otimes\diff f\right)\right)=0,
\end{equation*}
the identity
\begin{equation*}
\int_{\Sigma}\norm{\mrm{d}f}^2\omega=\int_{\Sigma} f\Delta(f)\,\omega
\end{equation*}
shows that we have
\begin{equation*}
\norm{\mrm{d}f}^2_{L^2}=\int f\left[2-\left(2\,\mrm{e}^f+\norm{\beta}^2\right)\left(1+\mrm{e}^{-2\,f}\norm{q}^2\right)+2\,\mrm{Re}\left(g\left(\mrm{e}^{-f}\bar{q},\nabla^{1,0}\beta\right)\right)-2\,\mrm{Re}\left(g\left(\mrm{e}^{-f}\bar{q},\beta\otimes\diff f\right)\right)\right]\omega.
\end{equation*}
Expanding out the product in the integrand, we see that the first three terms can be bounded explicitly in terms of $\norm{\beta}_{0}$ and $\norm{\nabla\beta}_{0}$ using the $\m{C}^0$-bound on $f$. As for the last term, we have by Cauchy--Schwarz
\begin{equation*}
\begin{split}
\int f\,2\,\mrm{Re}\left(g\left(\mrm{e}^{-f}\bar{q},\beta\otimes\diff f\right)\right)\omega=&\left\langle \beta\otimes\diff f,f\,\mrm{e}^{-f}q\right\rangle_{L^2}+\mrm{c.c.}\leq 2\,\norm*{f\,\mrm{e}^{-f}q}_{L^2}\norm*{\beta}_{L^2}\norm*{\diff f}_{L^2}<\\
<&\sqrt{2}\,\norm{f}_{L^2}\norm*{\beta}_{L^2}\norm*{\mrm{d}f}_{L^2}.
\end{split}
\end{equation*}
So there are some positive constants $C_1$ and $C_2$ that depend explicitly on our $\m{C}^0$-bound for $f$ and a bound for $\norm{\beta}_0$, such that
\begin{equation*}
\norm*{\mrm{d}f}^2_{L^2}<C_1+C_2\norm*{\mrm{d}f}_{L^2},
\end{equation*}
which clearly gives a bound on the $L^2$-norm of $\mrm{d}f$.

Now we write our equation as
\begin{equation*}
\Delta(f)=2-\left(2\,\mrm{e}^f+\norm{\beta}^2\right)\left(1+\mrm{e}^{-2\,f}\norm{q}^2\right)+2\,\mrm{Re}\left(g\left(\mrm{e}^{-f}\bar{q},\nabla^{1,0}\beta\right)\right)-2\,\mrm{Re}\left(g\left(\mrm{e}^{-f}\bar{q},\beta\otimes\diff f\right)\right).
\end{equation*}
Using the $\m{C}^0$-estimate, the condition $\mrm{e}^{-f}\norm{q} <1$ and the Cauchy--Schwarz inequality we get
\begin{equation*}
\card*{\Delta(f)}\leq C_3+2\card*{\mrm{Re}\left(g\left(\mrm{e}^{-f}\bar{q},\beta\otimes\diff f\right)\right)}\leq C_3+C_4\norm{\mrm{d}f} 
\end{equation*}
for positive constants $C_3$, $C_4$ that depend on $\norm{\beta}_0$, $\norm{\nabla^{1,0}\beta}_0$ and the $\m{C}^0$--estimate on $f$. This implies
\begin{equation*}
\norm{\Delta f}_{L^2}\leq\norm*{C_3+C_4\norm{\mrm{d}f}_\omega}_{L^2}\leq C_3+C_4\norm{\mrm{d}f}_{L^2}
\end{equation*}
so the $L^2$-bound on $\mrm{d}f$ gives us a $L^2$-bound on $\Delta f$. The same reasoning actually shows that $L^p$-bounds on $\mrm{d}f$ will imply $L^p$-bounds on $\Delta f$.

Recall the Sobolev inequality
\begin{equation*}
\norm{u}_{L^2}\leq K_1\norm{u}_{W^{1,1}}.
\end{equation*}
In particular for $u=\norm{\mrm{d}f}^2_\omega$ we find
\begin{equation*}
\norm*{\norm{\mrm{d}f}^2_\omega}_{L^2}\leq K_1\norm*{\norm{\mrm{d}f}^2_\omega}_{W^{1,1}}= K_1\left(\norm*{\norm{\mrm{d}f}^2_\omega}_{L^1}+\norm*{\nabla\norm{\mrm{d}f}^2_\omega}_{L^1}\right)
\end{equation*}
Now, $\norm*{\norm{\mrm{d}f}^2_\omega}_{L^1}=\norm{\mrm{d}f}^2_{L^2}$ and $\nabla\norm{\mrm{d}f}^2_\omega=2\,g(\nabla\mrm{d}f,\mrm{d}f)$, so by Cauchy--Schwarz
\begin{equation*}
\begin{split}
\norm*{\nabla\norm{\mrm{d}f}^2_\omega}_{L^1}=&\int\norm*{2\,g(\nabla\mrm{d}f,\mrm{d}f)}_\omega\omega=2\int\norm*{\nabla\mrm{d}f}_\omega\norm*{\mrm{d}f}_\omega\omega=2\left\langle\norm{\nabla\mrm{d}f}_{\omega},\norm{\mrm{d}f}_\omega\right\rangle_{L^2}\leq\\
\leq&2\norm{\nabla\mrm{d}f}_{L^2}\norm{\mrm{d}f}_{L^2}.
\end{split}
\end{equation*}
By elliptic estimates (c.f. \cite[Theorem $5.2$]{SpinGeometry_book}) we have
\begin{equation*}
\norm{\nabla d f}_{L^2}\leq K_2\left(\norm{f}_{L^2}+\norm{\Delta f}_{L^2}\right).
\end{equation*}
Thus we find
\begin{equation*}
\norm*{\norm{\mrm{d}f}^2_\omega}_{L^2}\leq K_1\left(\norm{\mrm{d}f}^2_{L^2}+2\,K_2\,\norm{\mrm{d}f}_{L^2}\left(\norm{f}_{L^2}+\norm{\Delta f}_{L^2}\right)\right).
\end{equation*}
Since $\norm*{\norm{\mrm{d}f}^2_\omega}_{L^2}=\norm{\mrm{d}f}^2_{L^4}$, from the $L^2$-bound on $\mrm{d}f$ and $\Delta(f)$ that we already have we deduce an $L^4$-bound on $\mrm{d}f$. 

Our previous discussion then shows that we can actually obtain (explicit) $L^4$-bounds on $\Delta f$, in terms of $\norm{\beta}_0$, $\norm{\nabla^{1,0}\beta}_0$, the Sobolev constant $K_1$ and the elliptic constant $K_2$.

\subsubsection{$\m{C}^{k,\alpha}$-bounds}

Recall Morrey's inequality for $n=2$, $p=4$ (c.f. \cite[\S$5.6.2$]{Evans_elliptic_PDE}):
\begin{equation*}
\norm{f}_{ 0,\frac{1}{2} }\leq K_3\norm{f}_{W^{1,4}}.
\end{equation*}
By our $L^4$ bound on $df$ this implies a $\m{C}^{0,\frac{1}{2}}$-estimate on $f$ in terms of the $\m{C}^0$-estimate on $f$, the Sobolev constants $K_1, K_3$ and the elliptic constant $K_2$. 

Moreover, the Sobolev inequality for $n=2$, $p=4$ (c.f. \cite[\S$5.6.3$]{Evans_elliptic_PDE}) tells us that
\begin{equation*}
\norm{f}_{ 3,\frac{1}{2} }\leq K_4\norm{f}_{W^{2,4}}
\end{equation*}
so our previous $L^4$ bound on $\Delta f$ gives  \emph{a priori} estimates for the $\m{C}^{3,\frac{1}{2}}$-norm of $f$ solving $(\star_1)$ (or \eqref{eq:HcscK_continuity} substituting $t\beta$ to $\beta$ in the previous discussion).

\subsection{Closedness}\label{ClosednessSec}

We can now complete the proof of closedness for our continuity path.

Our $\m{C}^{3,\frac{1}{2}}$-estimate for $f$ is enough to pass to the limit as $t \to \bar{t} \leq 1$ in the equation
\begin{equation*}
\Delta(f_t)-2+\left(2\,\mrm{e}^{f_t}+\norm{t\beta}^2\right)\left(1+\mrm{e}^{-2\,f_t}\norm{q_t}^2\right)-2\,\mrm{Re}\left(g\left(\mrm{e}^{-f_t}\bar{q_t},\nabla^{1,0}t\beta\right)\right)+2\,\mrm{Re}\left(g\left(\mrm{e}^{-f_t}\bar{q_t},t\beta\otimes\diff f_t\right)\right)=0
\end{equation*}
for $q_t = q(t\tau,f_t,t\beta)$. Bootstrapping then shows that the set of $t\in[0,1]$ for which this equation has a smooth solution is closed. Moreover, the $\m{C}^{3,\frac{1}{2}}$-estimate for $f$ follows from the $\m{C}^{0}$-estimate, which only requires the assumption $||\beta||_1 < 1$. 

What remains to be checked is that the quantity $||\mrm{e}^{-f_t}q_t||_0$ stays uniformly bounded away from $1$ along the continuity path. This is where the more refined control on the growth of $\norm{f_t}_{ 0,\frac{1}{2} }$ is required.

Our estimate \eqref{eq:stime_eta_cambiamentocoordinate} on $\eta$ for $k = 2$, $\alpha = 1/2$ immediately gives a bound on $q$ of the form \begin{equation*}
\begin{split}
\norm{q}_0&\leq  \norm{\tau}_0+\norm{\nabla^{1,0}\eta }_0\leq  \norm{\tau}_0+\norm{\eta }_{2,\frac{1}{2}} \\
&\leq  \norm{\tau}_0+C\mrm{e}^{\norm{f}_{0,\frac{1}{2}}} \norm{\beta}_{0,\frac{1}{2}}+O( \norm{\beta}_{0,\frac{1}{2}}\norm{\tau}_{0,\frac{1}{2}}).
\end{split}
\end{equation*}
Here the $O$ term depends on the background $\omega$ only through the elliptic constant $K$, and the inequality holds provided $\norm{f}_{0,\frac{1}{2}}$, $\norm{\beta}_{0,\frac{1}{2}}$, $\norm{\tau}_{0,\frac{1}{2}}$ are sufficiently small, also in terms of $K$. But we showed that there is a uniform a priori bound on $\norm{f}_{0,\frac{1}{2}}$, depending only on the condition $||\beta||_1 < 1$, the Sobolev constants $K_1, K_3$ and the elliptic constant $K_2$.

It follows that we if choose $\norm{\tau}_{0,\frac{1}{2}}$, $||\beta||_1$ small enough, depending only on the Sobolev constants $K_1, K_3$ and the elliptic constants $K, K_2$, then we can make sure that for all $t \in [0, 1]$ the norm $\norm{q_t}_0$ is sufficiently small so that the required bound
\begin{equation*}
\norm{\mrm{e}^{-f_t} q_t}_0 \leq  \mrm{e}^{\norm{f_t}_{0, \frac{1}{2}}} \norm{q_t}_{0} < 1
\end{equation*}
holds uniformly.
\subsection{Openness}\label{OpennessSec}

We complete our analysis of the continuity path \eqref{eq:HcscK_continuity} by showing that the set of times $t \in [0,1]$ for which there is a smooth solution is open. We will see that openness requires control of a further elliptic constant, namely the $\m{C}^{2,\frac{1}{2}}$ Schauder estimate for the Riemannian laplacian of the hyperbolic metric acting on functions.

It is convenient to write our equations in the form 
\begin{equation}\label{eq:sistema_openness_f}
\begin{dcases}
\frac{2}{1+\norm{q}_f^2}{\nabla^{1,0}_f}^*q=\beta\\
-2\,\mrm{e}^{-f}+\Delta_f(f)+(1+\norm{q}^2_f)(2+\norm{\beta}^2_f)-\left(g_f(\bar{q},\nabla^{1,0}_f\beta)+\mrm{c.c.}\right)=0\\
 \norm{q}^2_f<1.
\end{dcases}
\end{equation}
where the notation underlines that metric quantities are now computed with respect to the metric $\omega_f$. The last condition is clearly open, so we focus on the first two equations. These can be regarded as the zero-locus equations for the functional
\begin{equation*}
\begin{split}
\m{F}:H^0(K^2_\Sigma)\times H^0(K_\Sigma)\times\m{A}^{0}(K_\Sigma)\times\m{C}^\infty(\Sigma,\bb{R})&\to\m{A}^0(K_\Sigma)\times\m{C}^\infty(\Sigma,\bb{R})\\
(\tau,\beta,\eta,f)&\mapsto(\m{F}^1(\tau,\beta,\eta,f),\m{F}^2(\tau,\beta,\eta,f))
\end{split}
\end{equation*}
defined as
\begin{equation*}
\begin{split}
\m{F}^1(\tau,\beta,\eta,f)=&\frac{2}{1+\norm*{\tau+\nabla^{1,0}_f\eta}_f^2}{\nabla^{1,0}_f}^*\nabla^{1,0}_f\eta-\beta\\
\m{F}^2(\tau,\beta,\eta,f)=&-2\,\mrm{e}^{-f}+\Delta_f(f)+\left(1+\norm*{\tau+\nabla^{1,0}_f\eta}_f^2\right)(2+\norm{\beta}^2_f)-2\,\mrm{Re}\left(g_f\left(\bar{\tau}+\nabla^{0,1}_f\bar{\eta},\nabla^{1,0}_f\beta\right)\right).
\end{split}
\end{equation*}
Assume that $\m{F}(\tau,\beta,\eta,f)=(0,0)$. We want to show that if $\tau',\beta'$ are close enough to $\tau,\beta$ then we can also find $\eta',f'$ such that $\m{F}(\tau',\beta',\eta',f')=(0,0)$. To use the Implicit Function Theorem we should show that
\begin{equation*}
\begin{split}
\m{A}^0(K_\Sigma)\times\m{C}^\infty(\Sigma,\bb{R})&\to\m{A}^0(K_\Sigma)\times\m{C}^\infty(\Sigma,\bb{R})\\
(\dot{\eta},\varphi)&\mapsto D\m{F}_{(\tau,\beta,\eta,f)}(0,0,\dot{\eta},\varphi)
\end{split}
\end{equation*}
is surjective (on some appropriate Banach subspaces). We will show that in fact it is an isomorphism. For the rest of this section we will compute all metric quantities with respect to $\omega_f$, unless we specify otherwise, so we will drop the subscript $f$. As usual we write $q=\tau+\nabla^{1,0} \eta$.

Using $\m{F}^1(\tau,\beta,\eta,f)=0$, we compute
\begin{equation*}
\begin{split}
D\m{F}^1(\dot{\eta},\varphi)=&-\frac{\beta}{1+\norm{q}^2}\left(-2\,\varphi\,\norm{q}^2+2\,\mrm{Re}\left\langle\nabla^{1,0}_0\dot{\eta},\bar{q}\right\rangle\right)-\varphi\,\beta+\frac{2\,{\nabla^{1,0}}^*\nabla^{1,0} \dot{\eta}}{1+\norm{q}^2} \\
=&\varphi\,\beta\frac{\norm{q}^2-1}{1+\norm{q}^2}+\frac{2\,{\nabla^{1,0}}^*\nabla^{1,0}_0\dot{\eta}}{1+\norm{q}^2}-\frac{\beta}{1+\norm{q}^2}2\,\mrm{Re}\left\langle\nabla^{1,0}\dot{\eta},\bar{q}\right\rangle.
\end{split}
\end{equation*}
Similarly, using $\m{F}^2(\tau,\beta,\eta,f)=0$, we compute
\begin{equation*}
\begin{split}
D\m{F}^2(\dot{\eta},\varphi)=&\Delta(\varphi)+2\,\varphi\left(1-\norm{q}^2(1+\norm{\beta}^2)+2\mrm{Re}\langle\bar{q},\nabla\beta\rangle\right)+\\
&+2\,\mrm{Re}\left\langle(2+\norm{\beta}^2)\bar{q},\nabla^{1,0}_0\dot{\eta}\right\rangle-2\,\mrm{Re}\left\langle\nabla\bar{\beta},\nabla^{1,0} \dot{\eta}\right\rangle.
\end{split}
\end{equation*}

To prove that $(\dot{\eta},\varphi)\mapsto D\m{F}(\dot{\eta},\varphi)$ is an isomorphism we have to show that for any fixed $(\sigma,h)\in\m{A}^0(K_\Sigma)\times\m{C}^{\infty}(\Sigma)$ there is a unique pair $(\dot{\eta},\varphi)$ such that
\begin{equation}\label{eq:sistema_apertura}
\begin{dcases}
D\m{F}^1(\dot{\eta},\varphi)=\sigma\\
D\m{F}^2(\dot{\eta},\varphi)=h.
\end{dcases}
\end{equation}
Our strategy to prove this is to regard \eqref{eq:sistema_apertura} as a deformation of the system
\begin{equation}\label{eq:sistema_deformato}
\begin{dcases}
\frac{2\,{\nabla^{1,0}}^*\nabla^{1,0} \dot{\eta}}{1+\norm{q}^2} + \varphi\,\beta\frac{\norm{q}^2-1}{1+\norm{q}^2} =\sigma\\
\Delta(\varphi)+2\,\varphi(1-\norm{q}^2)=h.
\end{dcases}
\end{equation}
Since the two operators $\dot\eta\mapsto{\nabla^{1,0}}^*\nabla^{1,0}\dot{\eta}$ and $\varphi\mapsto\Delta(\varphi)+2\,\varphi$ are elliptic, self-adjoint and their kernel is trivial, it is straighforward to check that \eqref{eq:sistema_deformato} has a unique smooth solution $(\dot{\eta},\varphi)$ for each fixed $\sigma$, $h$. Now the equations \eqref{eq:sistema_apertura} differ from \eqref{eq:sistema_deformato} from terms which vanish as $\norm{\beta}_f$, $\norm{\nabla^{1,0}_f\beta}_f$, $\norm{\tau}_f$ and $\norm{\nabla^{1,0}_f\eta}_f$ go to zero; we have shown that all these terms can be bounded in terms of $\norm{\beta}_0$, $\norm{\nabla^{1,0} \beta}_0$, $\norm{\tau}_0$, effectively in terms of certain Sobolev and elliptic constants, so for $\norm{\beta}_{\m{C}^1}$ and $\norm{\tau}_0$ small enough we can make sure that \eqref{eq:sistema_apertura} also have a unique smooth solution. 

\begin{lemma}[Lemma $7.10$ in \cite{Fine_phd}]\label{lemma:inversione}
Let $D:B_1\to B_2$ be a bounded linear map between Banach spaces, with bounded inverse $D^{-1}$. Then any other linear bounded operator $L$ such that $||D-L||\leq (2\,||D^{-1}||)^{-1}$ is also invertible, and $||L^{-1}||\leq 2\,||D^{-1}||$.
\end{lemma}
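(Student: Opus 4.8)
The plan is to treat $L$ as a small bounded perturbation of the invertible operator $D$ and to invert it by a Neumann series. The first step is to factor out $D$: since $D^{-1}$ exists as a bounded operator and $D-L$ is bounded, one can write
\begin{equation*}
L = D - (D-L) = D\bigl(\idop - D^{-1}(D-L)\bigr),
\end{equation*}
where $\idop$ is the identity on $B_1$ and $E := D^{-1}(D-L) \in \mathcal{L}(B_1)$. Thus $L$ will be invertible as soon as $\idop - E$ is, and the whole argument reduces to inverting this perturbation of the identity.

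The key estimate controls the size of $E$. By submultiplicativity of the operator norm together with the hypothesis $\norm{D-L} \leq (2\,\norm{D^{-1}})^{-1}$, I would bound
\begin{equation*}
\norm{E} = \norm{D^{-1}(D-L)} \leq \norm{D^{-1}}\,\norm{D-L} \leq \tfrac{1}{2} < 1.
\end{equation*}
The factor $2$ in the hypothesis is chosen exactly so that $E$ lands in the contractive regime. Since $B_1$ is complete, $\mathcal{L}(B_1)$ is a Banach algebra, so the Neumann series $\sum_{n \geq 0} E^n$ converges in operator norm and inverts $\idop - E$, with
\begin{equation*}
\norm*{(\idop - E)^{-1}} \leq \frac{1}{1 - \norm{E}} \leq \frac{1}{1 - \tfrac{1}{2}} = 2.
\end{equation*}

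It then follows that $L = D(\idop - E)$ is a composition of two invertible bounded operators, hence invertible, with $L^{-1} = (\idop - E)^{-1} D^{-1}$; submultiplicativity once more gives
\begin{equation*}
\norm{L^{-1}} \leq \norm*{(\idop - E)^{-1}}\,\norm{D^{-1}} \leq 2\,\norm{D^{-1}},
\end{equation*}
which is the claimed bound. There is no genuine obstacle in this argument: the only points requiring verification are the contraction estimate $\norm{E} \leq \tfrac{1}{2}$, which is immediate from the hypothesis, and the convergence of the Neumann series, which is guaranteed by the completeness of $B_1$. The statement is simply the standard fact that the set of invertible bounded operators is open in $\mathcal{L}(B_1,B_2)$, here quantified with explicit constants.
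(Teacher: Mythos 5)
Your proof is correct: the paper does not prove this lemma at all (it is quoted from Lemma 7.10 of Fine's thesis), and your Neumann series argument --- writing $L = D(\idop - E)$ with $E = D^{-1}(D-L)$, $\norm{E}\leq \tfrac12$, and summing the geometric series in the Banach algebra of bounded operators on $B_1$ --- is exactly the standard proof of this quantitative openness-of-invertibility statement. All steps check out, including the norm bound $\norm{L^{-1}}\leq \norm{(\idop-E)^{-1}}\,\norm{D^{-1}}\leq 2\,\norm{D^{-1}}$.
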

In order to apply this result we regard $\m{F}$ as an operator
\begin{alignat*}{5}
\m{F}:\m{C}^{0,\alpha}(\Sigma,&K^2_\Sigma)\times\m{C}^{1,\alpha}(\Sigma,&&K_\Sigma)\times\m{C}^{2,\alpha}(\Sigma,&&K_\Sigma)\times\m{C}^{2,\alpha}(\Sigma,&&\bb{R})&&\to\m{C}^{0,\alpha}(\Sigma,K_\Sigma)\times\m{C}^{0,\alpha}(\Sigma,\bb{R})\\
&\tau &&\beta &&\eta &&\!\!f &&\mapsto(\m{F}^1(\tau,\beta,\eta,f),\m{F}^2(\tau,\beta,\eta,f))
\end{alignat*}
so that we are interested in the invertibility of the linear operator
\begin{alignat*}{3}
L:\m{C}^{2,\alpha}(\Sigma,&K_\Sigma)\times\m{C}^{2,\alpha}(\Sigma,&&\bb{R})&&\to\m{C}^{0,\alpha}(\Sigma,K_\Sigma)\times\m{C}^{0,\alpha}(\Sigma,\bb{R})\\
&\dot{\eta} &&\!\!\varphi &&\mapsto (D\m{F}^1_{(\tau,\beta,\eta,f)}(\dot{\eta},\varphi),D\m{F}^2_{(\tau,\beta,\eta,f)}(\dot{\eta},\varphi)).
\end{alignat*}
We compare $L$ to the auxiliary linear operator
\begin{alignat*}{3}
D:\m{C}^{2,\alpha}(\Sigma,&K_\Sigma)\times\m{C}^{2,\alpha}(\Sigma,&&\bb{R})&&\to\m{C}^{0,\alpha}(\Sigma,K_\Sigma)\times\m{C}^{0,\alpha}(\Sigma,\bb{R})\\
&\dot{\eta} &&\!\!\varphi &&\mapsto\left(\frac{2\,{\nabla^{1,0}}^*\nabla^{1,0}\dot{\eta}}{1+\norm{q}^2}+\varphi\,\beta\frac{\norm{q}^2-1}{1+\norm{q}^2},\Delta(\varphi)+2\,\varphi(1-\norm{q}^2)\right).
\end{alignat*}

$D$ is invertible, and the norm of $D^{-1}$ is controlled by the Schauder constants of $\Delta$ and ${\nabla^{1,0}}^*\nabla^{1,0}$. The difference between $D$ and $L$ is given by the operator
\begin{equation*}
\begin{split}
(D-L)(\dot{\eta},\varphi)=&\Bigg(\frac{-\beta}{1+\norm{q}^2}2\,\mrm{Re}\left\langle\nabla^{1,0}\dot{\eta},\bar{q}\right\rangle,\\
&\quad\quad 2\varphi\left(2\,\mrm{Re}\langle\bar{q},\nabla^{1,0}\beta\rangle-\norm{q}^2\norm{\beta}^2\right)+2\,\mrm{Re}\left\langle(2+\norm{\beta}^2)\bar{q},\nabla^{1,0}\dot{\eta}\right\rangle-2\mrm{Re}\left\langle\nabla\bar{\beta},\nabla^{1,0}\dot{\eta}\right\rangle\Bigg)
\end{split}
\end{equation*}
and we can estimate
\begin{equation*}
\begin{split}
\norm{D-L}\leq&\norm{\beta}_{1,\alpha}\left(1+\norm{q}_{0,\alpha}\right)\left(1+\norm{\beta}_{1,\alpha}\norm{q}_{0,\alpha}\right)+2\norm{q}_{0,\alpha}\left(1+\norm{\beta}_{1,\alpha}\right).
\end{split}
\end{equation*}
It is important to recall that in the present context all these norms are computed using the conformal metric $\omega_f$. However, our H\"older estimates on the conformal potential $f$ along the continuity method tell us that these norms are uniformly equivalent to those computed using the background hyperbolic metric $\omega$. Using also our H\"older estimates on $q$, it follows that we can control the norm of $D - L$, for $\alpha=1/2$, by the norms $\norm{\beta}_{\m{C}^{1,\frac{1}{2}}(\omega)}$, $\norm{\tau}_{\m{C}^{0,\frac{1}{2}}(\omega)}$. It these are small enough, then by Lemma \ref{lemma:inversione} the operator $L$ is invertible. Finally, bootstrapping shows that a solution of \eqref{eq:sistema_openness_f} in $\m{C}^{2,\alpha}$ is actually smooth.  

\addcontentsline{toc}{section}{References}
\bibliographystyle{abbrv}
{
\bibliography{bibliografia_HCSCK}
}
\vskip1cm
\noindent\rm{SISSA, via Bonomea 265, 34136 Trieste, Italy}\\
\noindent\tt{cscarpa@sissa.it}\\
\noindent\tt{jstoppa@sissa.it}
\end{document}